\numberwithin{equation}{section}
\numberwithin{figure}{section}
\subjclass{15A52}
\theoremstyle{plain}
  \newtheorem{theorem}{Theorem}
  \newtheorem{corollary}[theorem]{Corollary}
\theoremstyle{definition}
  \newtheorem{definition}[theorem]{Definition}
\begin{document}

\title[Special bases for the vector space of square matrices]{Special bases for the vector space of square matrices}

\author{Edinah K. Gnang}

\address{Department of Computer Science, Rutgers, New Brunswick NJ 08854-8019}

\email{gnang@cs.rutgers.edu}

\thanks{Edinah K. Gnang is supported by NSF grant DGE-0549115.}
\begin{abstract}
We describe families of complete orthogonal bases of full rank matrices
which span the vector spaces of square matrices. The proposed bases
generalise non-trivially the Pauli matrice while shedding light on
their algebraic properties. Finally we introduce the notion $k$-pseudo-closure
for orthogonal bases spanning vector subspaces of square matrices
and discuss their connections with hadamard matrices. 
\end{abstract}
\maketitle
\setcounter{tocdepth}{1} 

\section{Introduction}

It is a common practice when manipulating $n\times n$ matrices to
express them as linear combinations of basis elements from the complete
orthonormal basis 
\[
B_{n}=\left\{ \mathbf{e}_{i}\mathbf{e}_{j}^{t}\right\} _{0\le i,j<n}
\]
 where the set $\left\{ \mathbf{e}_{i}\right\} _{0\le i<n}$ denotes
the canonical euclidean basis of $n$-dimensional column vectors.
Although a prevalent choice as canonical basis for expressing square
matrices, we observe that the basis $B_{n}$ is not as well behaved
with respect to the matrix product operation, more precisely 
\[
\left(\mathbf{e}_{i_{1}}\mathbf{e}_{j_{1}}^{t}\right)\left(\mathbf{e}_{i_{2}}\mathbf{e}_{j_{2}}^{t}\right)=\begin{cases}
\begin{array}{cc}
\mathbf{e}_{i_{1}}\mathbf{e}_{j_{2}}^{t} & \mbox{if }j_{1}=i_{2}\\
\mathbf{0} & \mbox{otherwise}
\end{array}\end{cases},
\]
 in addition elements of $B_{n}$ are rank $1$ matrices. An important
insight confered by linear algebra is the fact that the choice of
a basis for a vector spaces is crucial to the investigation of properties
of a particular set of vectors. Incidentally we discuss here alternative
complete orthogonal bases of full rank matrices which in some instances
are also pseudo-closed under matrix product as defined bellow.

\begin{definition}{[}Pseudo-closure{]}\label{def:Pseudo-Closure}
Let $k\geq1$ and $G$ denote an orthogonal basis for a subspace of
square matrices. $G$ is said to be pseudo-closed of order $k$ with
respect to the matrix product operation or simply a $k$-pseudo-closed
basis if

\begin{equation}
\forall\mathbf{A},\mathbf{B}\in G,\ \begin{cases}
\begin{array}{c}
\mathbf{A}^{\star^{k+1}}=\mathbf{A}\\
\mathbf{B}^{\star^{k+1}}=\mathbf{B}
\end{array},\end{cases}\exists\ \mathbf{D}\mbox{ a diagonal matrix s.t. }\mathbf{D}\,\left(\mathbf{A}\,\mathbf{B}\right)\in G\ \mbox{and}\ \mathbf{D}^{k}=\mathbf{I}.
\end{equation}

\end{definition} where for $\mathbf{A}=\left(a_{i,j}\right)_{0\le i,j<n}$
we define $\mathbf{A}^{\star^{k+1}}:=\left(a_{i,j}^{k+1}\right)_{0\le i,j<n}$.

\section{Orthonormal basis Induced by the Unitary group}

Let $\left\{ \mathbf{e}_{i}\right\} _{0\le i<n}$ denote the canonical
euclidean basis of column vectors. We define the \emph{fourier complete
orthogonal basis} to be the set $F_{n}$,

\begin{equation}
F_{n}\::=\left\{ \mathbf{B}\left(k,\, l\right)=\sum_{0\le j<n}\left(\mathbf{e}_{j}\mathbf{e}_{\left\{ j+k\:\mbox{mod}\: n\right\} }^{t}\right)\,\exp\left\{ i\frac{2\pi}{n}j\, l\right\} \right\} _{0\le k,l<n}.
\end{equation}
We think of the set of matrices as an inner-product space with the
inner-product being defined as follows

\[
\left\langle \mathbf{A},\:\mathbf{M}\right\rangle \::=\mbox{Tr}\left\{ \mathbf{A}\,\mathbf{M}^{\dagger}\right\} 
\]
 hence 
\begin{equation}
\forall\;\mathbf{A}\in\mathbb{C}^{n\times n},\quad\mathbf{A}=\sum_{0\le k,l<n}n^{-1}\left\langle \mathbf{A},\:\mathbf{B}\left(k,\, l\right)\right\rangle \mathbf{B}\left(k,\, l\right).
\end{equation}
 The proposed fourier basis generalizes the Pauli Matrices. In contrast
to the conventional canonical orthonormal matrix basis $B_{n}$, the
basis elements of the fourier basis are each full rank and it follows
from their definition that they consititute a $n$-pseudo-closed complete
orthogonal basis spanning the vector space of $n\times n$ matrices.
It also follows from the defintion of $F_{n}$ that it's elements
generate a finite mulitplicative group of matrices of order bounded
by $n^{n+1}$. Furthermore the group generated by $F_{n}$ is isomorphic
to a subgroup of $S_{n^{2}}$ which we represente here using matrices
in $\left\{ 0,1\right\} ^{n^{2}\times n^{2}}$. Let $T$ denote the
group Isomorphism 
\[
T\::\:\mathbb{C}^{n\times n}\rightarrow\mathbb{C}^{n^{2}\times n^{2}}
\]
 
\begin{equation}
T\left(\mathbf{B}\left(k,\, l\right)\right)=\sum_{0\le u<n}\left(\mathbf{e}_{u}\mathbf{e}_{\left\{ u+k\:\mbox{mod}\: n\right\} }^{t}\right)\otimes\left(\sum_{0\le v<n}\left(\mathbf{e}_{v}\mathbf{e}_{\left\{ v+j\times l\:\mbox{mod}\: n\right\} }^{t}\right)\right)=\mathbf{F}\left(k,\, l\right),
\end{equation}
 the isomorphism is naturally extended to the set of all $n\times n$
matrice as follows 
\begin{equation}
\forall\mathbf{A}\in\mathbb{C}^{n\times n},\quad T\left(\mathbf{A}\right)=\sum_{0\le k,l<n}n^{-1}\left\langle \mathbf{A},\:\mathbf{B}\left(k,\, l\right)\right\rangle \mathbf{F}\left(k,\, l\right).
\end{equation}
 so that 
\begin{equation}
\begin{cases}
\begin{array}{c}
T\left(\mathbf{A}_{1}+\mathbf{A}_{2}\right)=T\left(\mathbf{A}_{1}\right)\times T\left(\mathbf{A}_{2}\right)\\
T\left(\mathbf{A}_{1}\times\mathbf{A}_{2}\right)=T\left(\mathbf{A}_{1}\right)\times T\left(\mathbf{A}_{2}\right)
\end{array} & \forall\mathbf{A}_{1},\mathbf{A}_{2}\in\mathbb{C}^{n\times n}\end{cases}.
\end{equation}
 We also note that the matrix $\sum_{0\le k<n}\mathbf{B}\left(k,\, k\right)$
is unitary and correponds to the Discrete Fourier Transform (DFT)
matrix. from which it follows that the matrix product of the DFT matrix
with an arbitrary matrix $\mathbf{A}$ with entries in $\mathbb{Q}\left[e^{i\frac{2\pi}{3}}\right]$
can be recovered without using complex numbers at all.

Incidentaly there are families of complete orthogonal matrix basis
analogous to the fourier basis associated with arbitrary unitary matrices
and expressed by 
\begin{equation}
\left\{ \mathbf{Q}_{\mathbf{U}}\left(k,\, l\right)=\sum_{0\le j<n}\left(\mathbf{e}_{j}\mathbf{e}_{\left\{ j+k\:\mbox{mod}\: n\right\} }^{t}\right)u_{j\, l}\right\} _{0\le k,l<n}
\end{equation}
 where 
\[
\mathbf{U}\,\mathbf{U}^{\dagger}=\mathbf{I}
\]
 It also follows from this observation that for a hadamard matrix
$\mathbf{H}$, the orthonormal basis 
\begin{equation}
\left\{ \mathbf{Q}_{\mathbf{H}}\left(k,\, l\right)=\sum_{0\le j<n}\left(\mathbf{e}_{j}\mathbf{e}_{\left\{ j+k\:\mbox{mod}\: n\right\} }^{t}\right)h_{j\, l}\right\} _{0\le k,l<n}
\end{equation}
 is a $2$-pseudo-closed complete orthogonal basis spanning the set
of $n\times n$ matrices.

\begin{theorem}\label{2-pseudo-closure} Let $M_{n}$ denote the
vector space of $n\times n$ matrices with complex entries there exist
a set of $2$-pseudo-close complete orthgonal basis of full rank matrices
$\mathcal{B}$ which spans $M_{n}$ if and only if 
\begin{equation}
\exists\mathbf{H}\in M_{n}\mbox{ such that }\mathbf{H}^{T}\mathbf{H}=\mathbf{I}\mbox{ and }\mathbf{H}\star\mathbf{H}=\mathbf{1}_{n\times n}.
\end{equation}
 \end{theorem}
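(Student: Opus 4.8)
The plan is to read the right-hand condition as the existence of a (real) Hadamard matrix of order $n$: the equation $\mathbf{H}\star\mathbf{H}=\mathbf{1}_{n\times n}$ forces every entry of $\mathbf{H}$ to satisfy $h_{ij}^{2}=1$, i.e. $h_{ij}\in\{+1,-1\}$, while $\mathbf{H}^{T}\mathbf{H}$ being (after the harmless rescaling by $1/\sqrt{n}$) a scalar multiple of $\mathbf{I}$ is exactly the orthogonality of the columns. For the backward implication there is almost nothing to do: given such an $\mathbf{H}$, the basis $\{\mathbf{Q}_{\mathbf{H}}(k,l)\}_{0\le k,l<n}$ introduced above already furnishes a complete orthogonal basis of full-rank matrices, and it was observed there to be $2$-pseudo-closed. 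Hence I would simply set $\mathcal{B}=\{\mathbf{Q}_{\mathbf{H}}(k,l)\}$ and invoke that computation.

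The substance is the forward implication. Assume $\mathcal{B}=\{\mathbf{A}_{1},\dots,\mathbf{A}_{n^{2}}\}$ is a $2$-pseudo-closed complete orthogonal basis of full-rank matrices. Since each member is fixed by the entrywise cube (this is the reading that makes the pseudo-closure clause non-vacuous), its entries lie in $\{0,+1,-1\}$. First I would unwind the definition: for every pair there is a sign-diagonal $\mathbf{D}$ (i.e. $\mathbf{D}^{2}=\mathbf{I}$) with $\mathbf{D}(\mathbf{A}_{i}\mathbf{A}_{j})\in\mathcal{B}$, so
\[ \mathbf{A}_{i}\mathbf{A}_{j}=\mathbf{D}_{ij}\,\mathbf{A}_{\sigma(i,j)} \]
for a sign-diagonal $\mathbf{D}_{ij}$ and an index $\sigma(i,j)$. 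In particular every product $\mathbf{A}_{i}\mathbf{A}_{j}$ — and hence every power $\mathbf{A}_{i}^{m}$ — again has entries in $\{0,\pm1\}$. The key structural claim is then that each $\mathbf{A}_{i}$ is a \emph{signed permutation matrix} (exactly one nonzero entry, equal to $\pm1$, in each row and each column).

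Granting this claim the proof finishes cleanly. Grouping the $\mathbf{A}_{i}$ by their underlying permutation, the Parseval relation $\sum_{m}(\mathbf{A}_{m})_{ab}(\mathbf{A}_{m})_{a'b'}/\langle\mathbf{A}_{m},\mathbf{A}_{m}\rangle=\delta_{(a,b),(a',b')}$ together with $\langle\mathbf{A}_{m},\mathbf{A}_{m}\rangle=n$ forces each cell $(a,b)$ to be covered by exactly one permutation-class, so the $n$ supports partition the grid: they form a sharply transitive set, equivalently a Latin square, and $M_{n}=\bigoplus_{\pi}V_{\pi}$ with each $V_{\pi}=\{\mathbf{D}\,\mathbf{P}_{\pi}\}$ of dimension $n$. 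Consequently each class contains exactly $n$ mutually orthogonal signed permutations $\mathbf{D}_{1}\mathbf{P}_{\pi},\dots,\mathbf{D}_{n}\mathbf{P}_{\pi}$ sharing one permutation $\pi$; writing $h^{(a)}$ for the $\pm1$ diagonal of $\mathbf{D}_{a}$, orthogonality reads $\langle\mathbf{D}_{a}\mathbf{P}_{\pi},\mathbf{D}_{b}\mathbf{P}_{\pi}\rangle=\sum_{c}h^{(a)}_{c}h^{(b)}_{c}=n\,\delta_{ab}$, so the matrix $\mathbf{H}=(h^{(a)}_{c})$ has $\pm1$ entries and orthogonal rows, i.e. it is a Hadamard matrix of order $n$.

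I expect the one genuinely hard step to be the signed-permutation claim. The mechanism is that the $\{0,\pm1\}$-closure of all products is extremely rigid: a non-monomial invertible $\{0,\pm1\}$ matrix, for instance an upper-triangular one with a unit superdiagonal, is immediately excluded because its square already produces an entry $2$, so \emph{Jordan-type} spreading cannot occur. To convert this into a proof I would show that $\sigma(i,\cdot)$ is a bijection of the index set — so that $\mathcal{B}$ is, up to sign-diagonals, a group of monomial matrices — and in particular that $\mathbf{A}_{i}^{-1}$ is again a sign-diagonal multiple of some $\mathbf{A}_{j}$; then $\mathbf{A}_{i}$ and $\mathbf{A}_{i}^{-1}$ are both $\{0,\pm1\}$ with $\mathbf{A}_{i}\mathbf{A}_{i}^{-1}=\mathbf{I}$ and all powers pointwise bounded, which forces the single-nonzero-per-row-and-column structure. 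Establishing this bijectivity/group property from orthogonality together with the pointwise product bound is the crux; once it is in place, the Latin-square partition and the extraction of $\mathbf{H}$ above are routine.
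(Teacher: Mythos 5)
Your backward direction coincides with the paper's (take $\mathcal{B}=\{\mathbf{Q}_{\mathbf{H}}(k,l)\}$ and cite the discussion of Section 2), but your forward direction has a genuine gap: the signed-permutation claim, which you yourself flag as the crux, is never proved, and the sketch offered for it does not close. Note in particular that the pseudo-closure axiom only supplies a \emph{pair-dependent} sign-diagonal $\mathbf{D}_{ij}$ with $\mathbf{D}_{ij}\mathbf{A}_{i}\mathbf{A}_{j}\in\mathcal{B}$; hence $\mathbf{A}_{i}^{3}=\mathbf{A}_{i}\,\mathbf{D}_{ii}\,\mathbf{A}_{\sigma(i,i)}$ is a word with a sign-diagonal wedged between two basis elements, about which the axiom says nothing, so even your preliminary assertion that every power $\mathbf{A}_{i}^{m}$ stays $\{0,\pm1\}$-valued is unjustified. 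There is a second gap in the part you call routine: the diagonal Parseval identity $\sum_{m}(\mathbf{A}_{m})_{ab}^{2}=n$ says that each cell lies in the support of exactly $n$ basis elements, \emph{not} that those $n$ elements share one underlying permutation, and within-class orthogonality only gives class size $\le n$. Nothing you invoke forces some class to have size exactly $n$, which is what the extraction of $\mathbf{H}$ needs. Concretely, for $n=4$, with $c$ a $4$-cycle and $\tau$ a transposition, the configuration with classes $\{c^{i}\}$ of multiplicity $3$ and $\{c^{i}\tau\}$ of multiplicity $1$ satisfies $\sum_{i}3\,\mathbf{P}_{c^{i}}+\sum_{i}\mathbf{P}_{c^{i}\tau}=4\cdot\mathbf{1}_{4\times4}$, has $16=n^{2}$ elements, respects all your counting constraints and admits orthogonal sign vectors within each class, yet no class has $n$ members; ruling out such configurations requires the off-diagonal orthogonality relations with their signs, i.e., real work rather than bookkeeping.

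The paper's own proof avoids both difficulties by never classifying the whole basis. It restricts attention to diagonal matrices: completeness forces $\mathcal{B}$ to express every diagonal matrix $\mathbf{D}$; using the characterization $\mathbf{D}^{m}=\mathbf{D}^{\star^{m}}$ of diagonal matrices it argues that any basis element with $\langle\mathbf{D},\mathbf{N}(k,l)\rangle\ne0$ must itself be diagonal; full rank together with entries in $\{0,\pm1\}$ then forces those elements to have $\pm1$ diagonals; and since these diagonal elements are pairwise orthogonal and must span the $n$-dimensional space of diagonal matrices, there are exactly $n$ of them, whose diagonals stacked as rows form the desired Hadamard matrix. In effect, the paper runs your ``one permutation class yields $\mathbf{H}$'' extraction on the identity class alone, and it produces that class directly from completeness rather than from a global monomial-structure theorem (the paper's justification of the ``overlapping elements are diagonal'' step is itself terse, but it is a far more local rigidity statement than yours). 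If you want to repair your proposal, the most economical fix is the same move: forget the other classes entirely and show that the diagonal members of $\mathcal{B}$ already supply $n$ mutually orthogonal $\pm1$ diagonals.
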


Similarly to the fourier basis case, the complete orthogonal matrix
basis associated with a hadamard matrix $\mathbf{H}$ generate a finite
matrix group $\mathcal{H}$ of order bouded by $n2^{n}$ which we
call \emph{the Hadamard group}. The Hadamard group is isomorphic to
a subgroup of $S_{2n}$ and the group isomorphism $R$ is described
bellow 

\begin{corollary}\label{Isomorphism} It follows that hadamard matrix
$\mathbf{H}$ induces a non trivial injective map 
\[
R:\: M_{n}\rightarrow M_{2n}
\]
 
\[
\forall\mathbf{A}\in\mathbb{C}^{n\times n},
\]
 
\begin{equation}
R\left(\mathbf{A}\right)=\sum_{0\le k,l<n}n^{-1}\left\langle \mathbf{A},\:\mathbf{Q}_{\mathbf{H}}\left(k,\, l\right)\right\rangle \sum_{0\le j<n}\left(\mathbf{e}_{j}\mathbf{e}_{\left\{ j+k\:\mbox{mod}\: n\right\} }^{t}\right)\otimes\begin{cases}
\begin{array}{cc}
\left(\begin{array}{cc}
1 & 0\\
0 & 1
\end{array}\right) & \mbox{if }h_{j\, l}=1\\
\left(\begin{array}{cc}
0 & 1\\
1 & 0
\end{array}\right) & \mbox{if }h_{j\, l}=-1
\end{array}\end{cases}.
\end{equation}
 so that 
\begin{equation}
\begin{cases}
\begin{array}{c}
R\left(\mathbf{A}+\mathbf{B}\right)=R\left(\mathbf{A}\right)+R\left(\mathbf{B}\right)\\
R\left(\mathbf{A}\times\mathbf{B}\right)=R\left(\mathbf{A}\right)\times R\left(\mathbf{B}\right)
\end{array}\end{cases}.
\end{equation}
 \end{corollary}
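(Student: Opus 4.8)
The plan is to treat the three assertions in turn, and the cleanest way to organize everything is to first reformulate the basis multiplicatively. Writing $S(k)=\sum_{0\le j<n}\mathbf{e}_j\mathbf{e}_{\{j+k\,\mathrm{mod}\,n\}}^t$ for the cyclic shift (so that $S(k)S(k')=S(k+k')$ with indices read mod $n$) and $D_l=\diag(h_{0l},\dots,h_{(n-1)l})$, one has the factorization $\mathbf{Q}_{\mathbf{H}}(k,l)=D_l\,S(k)$. Setting $M(1)=\mathbf{I}_2$ and $M(-1)=\mathbf{J}$ with $\mathbf{J}=\left(\begin{smallmatrix}0&1\\1&0\end{smallmatrix}\right)$, the image of a basis element is $\mathbf{F}_{\mathbf{H}}(k,l):=R\!\left(\mathbf{Q}_{\mathbf{H}}(k,l)\right)=\sum_{0\le j<n}\left(\mathbf{e}_j\mathbf{e}_{\{j+k\}}^t\right)\otimes M(h_{jl})$. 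Additivity is then immediate and requires no computation: each coefficient $\mathbf{A}\mapsto n^{-1}\langle\mathbf{A},\mathbf{Q}_{\mathbf{H}}(k,l)\rangle$ is linear in $\mathbf{A}$, so $R$ is a linear map and in particular $R(\mathbf{A}+\mathbf{B})=R(\mathbf{A})+R(\mathbf{B})$.

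For injectivity (and hence non-triviality) I would exploit the affine identity $M(h)=\tfrac12(\mathbf{I}_2+\mathbf{J})+h\cdot\tfrac12(\mathbf{I}_2-\mathbf{J})=:P_{+}+h\,P_{-}$, valid for $h=\pm1$, which linearizes the sign substitution. It gives the decomposition $\mathbf{F}_{\mathbf{H}}(k,l)=S(k)\otimes P_{+}+\mathbf{Q}_{\mathbf{H}}(k,l)\otimes P_{-}$. Now if $R(\mathbf{A})=0$, then writing $a_{kl}=n^{-1}\langle\mathbf{A},\mathbf{Q}_{\mathbf{H}}(k,l)\rangle$ we have $\sum_{kl}a_{kl}\mathbf{F}_{\mathbf{H}}(k,l)=0$. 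Applying $\mathrm{id}_{M_n}\otimes\lambda$, where $\lambda$ is the linear functional on $M_2$ extracting the $P_{-}$–coordinate, annihilates the $P_{+}$ terms and leaves $\sum_{kl}a_{kl}\mathbf{Q}_{\mathbf{H}}(k,l)=0$; since $\{\mathbf{Q}_{\mathbf{H}}(k,l)\}$ is a complete orthogonal basis, all $a_{kl}=0$, whence $\mathbf{A}=0$. This step is clean and uses only orthogonality together with the independence of $P_{+},P_{-}$ in $M_2$.

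Multiplicativity is the substantial part. Because $R$ is linear and both $(\mathbf{A},\mathbf{B})\mapsto R(\mathbf{A}\mathbf{B})$ and $(\mathbf{A},\mathbf{B})\mapsto R(\mathbf{A})R(\mathbf{B})$ are bilinear, it suffices to verify the identity on pairs of basis elements. Using the commutation relation $S(k)D_{l'}=D^{(k)}_{l'}S(k)$ with $D^{(k)}_{l'}=\diag\!\left(h_{\{j+k\}\,l'}\right)_{0\le j<n}$, one finds $\mathbf{Q}_{\mathbf{H}}(k,l)\,\mathbf{Q}_{\mathbf{H}}(k',l')=\left(D_lD^{(k)}_{l'}\right)S(k+k')$, a signed shift of step $K:=k+k'$ with diagonal $s_j=h_{jl}\,h_{\{j+k\}\,l'}\in\{\pm1\}$. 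On the other side, the mixed-product rule $(A\otimes B)(C\otimes D)=(AC)\otimes(BD)$ together with the homomorphism property $M(a)M(b)=M(ab)$ (which holds since $\mathbf{J}^2=\mathbf{I}_2$) gives $\mathbf{F}_{\mathbf{H}}(k,l)\,\mathbf{F}_{\mathbf{H}}(k',l')=\sum_{0\le j<n}\left(\mathbf{e}_j\mathbf{e}_{\{j+K\}}^t\right)\otimes M(s_j)$. Thus the whole claim collapses to one identity: that $R$ sends an arbitrary signed shift $\sum_j(\mathbf{e}_j\mathbf{e}_{\{j+K\}}^t)s_j$ to $\sum_j(\mathbf{e}_j\mathbf{e}_{\{j+K\}}^t)\otimes M(s_j)$.

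This last identity is where I expect the main obstacle to lie. Expanding the signed shift in the basis as $\sum_l c_l\,\mathbf{Q}_{\mathbf{H}}(K,l)$ with $c_l=n^{-1}\sum_j s_j h_{jl}$, and applying the affine identity $M(h)=P_{+}+hP_{-}$, one computes $R$ of the shift to be $\sum_j(\mathbf{e}_j\mathbf{e}_{\{j+K\}}^t)\otimes\!\left(\left(\sum_l c_l\right)P_{+}+s_jP_{-}\right)$. The $P_{-}$–parts match the target automatically, so the entire statement reduces to the scalar condition $\sum_l c_l=1$, equivalently $n^{-1}\sum_j s_j r_j=1$ where $r_j=\sum_l h_{jl}$ is the $j$-th row sum of $\mathbf{H}$. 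Controlling this coefficient is the delicate heart of the proof: it is precisely where the orthogonality and closure structure of $\mathbf{H}$ must be invoked, and it is subtle because the defining substitution $-1\mapsto\mathbf{J}$ competes with linearity, which forces $R(-\mathbf{Q})=-R(\mathbf{Q})$ whenever a product of two basis elements returns a negative multiple of a basis element. I would therefore expect the argument to require the stronger hypothesis that the basis be genuinely closed under products (products of basis elements being basis elements with coefficient $+1$), under which $\sum_l c_l=1$ can be forced; identifying exactly which Hadamard matrices guarantee this, and handling the sign bookkeeping for the scalar factors that arise, is the step I anticipate will demand the real work.
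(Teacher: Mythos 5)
Your reduction is correct as far as it goes---the factorization $\mathbf{Q}_{\mathbf{H}}(k,l)=D_{l}S(k)$, the product formula $\mathbf{Q}_{\mathbf{H}}(k,l)\,\mathbf{Q}_{\mathbf{H}}(k',l')=\diag\left(h_{jl}h_{\{j+k\}l'}\right)S(k+k')$, the additivity and injectivity arguments via $M(h)=P_{+}+hP_{-}$, and the reduction of multiplicativity to the scalar condition $\sum_{l}c_{l}=n^{-1}\sum_{j}s_{j}r_{j}=1$ are all sound. But the obstacle you flagged at the end is not a hard step awaiting "real work": it is a genuine counterexample, and the multiplicativity claim of the corollary is false for the linear map $R$ as defined. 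Take $n=2$ and $\mathbf{H}=\left(\begin{smallmatrix}1 & 1\\1 & -1\end{smallmatrix}\right)$. Then $\mathbf{Q}_{\mathbf{H}}(1,1)=\left(\begin{smallmatrix}0 & 1\\-1 & 0\end{smallmatrix}\right)$ and $\mathbf{Q}_{\mathbf{H}}(1,1)^{2}=-\mathbf{I}_{2}$, so linearity (which you proved) forces $R\left(\mathbf{Q}_{\mathbf{H}}(1,1)^{2}\right)=-R(\mathbf{I}_{2})=-\mathbf{I}_{4}$, whereas your own product formula gives $s_{0}=s_{1}=-1$ and hence $R\left(\mathbf{Q}_{\mathbf{H}}(1,1)\right)^{2}=\mathbf{I}_{2}\otimes\left(\begin{smallmatrix}0 & 1\\1 & 0\end{smallmatrix}\right)\neq-\mathbf{I}_{4}$; in your notation $r_{0}=2$, $r_{1}=0$, so $n^{-1}\sum_{j}s_{j}r_{j}=-1\neq1$. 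The rescue you anticipate (products of basis elements being basis elements with coefficient exactly $+1$) would indeed force $\sum_{l}c_{l}=1$, but it cannot hold here: a finite set of invertible matrices closed under multiplication is a group containing $\mathbf{I}$, which forces an all-ones column of $\mathbf{H}$, and then the $n$-th power of a shift-by-one basis element equals $\left(\prod_{j}h_{jl}\right)\mathbf{I}$, producing $-\mathbf{I}$ as soon as some column has entry product $-1$ (as at $n=2$). The paper's own bound $n2^{n}$ on the order of the Hadamard group already counts the $2^{n}$ diagonal sign matrices, i.e., negative multiples of basis elements do occur in the generated group, and any map that is simultaneously additive and multiplicative must send such an element both to a negated matrix and to a $\{0,1\}$ permutation matrix---a contradiction.

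For the comparison you were asked to make: there is nothing in the paper to reconcile your argument with, because the paper never proves this corollary---the proof environment that follows it is devoted entirely to Theorem \ref{2-pseudo-closure} (that $2$-pseudo-closed complete bases of full rank exist iff a Hadamard matrix exists) and never returns to $R$. What your computation actually establishes is the true statement hiding behind the corollary: by $M(a)M(b)=M(ab)$ and your formula for $\mathbf{F}_{\mathbf{H}}(k,l)\,\mathbf{F}_{\mathbf{H}}(k',l')$, the substitution $1\mapsto\mathbf{I}_{2}$, $-1\mapsto\left(\begin{smallmatrix}0 & 1\\1 & 0\end{smallmatrix}\right)$ is a faithful \emph{multiplicative} homomorphism from the group of signed cyclic shifts (hence from the Hadamard group generated by the $\mathbf{Q}_{\mathbf{H}}(k,l)$) into the permutation matrices of $S_{2n}$, which is the "isomorphic to a subgroup of $S_{2n}$" assertion in the surrounding text. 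That substitution map, however, is not the linear extension $R$: linearity pins $R(-\mathbf{Q})=-R(\mathbf{Q})$ while the substitution sends $-\mathbf{Q}$ to $\left(\mathbf{I}_{n}\otimes\left(\begin{smallmatrix}0 & 1\\1 & 0\end{smallmatrix}\right)\right)R(\mathbf{Q})$, and these differ. So the two displayed identities of the corollary cannot hold simultaneously for any map agreeing with the stated formula (the same defect afflicts the analogous claims for $T$ in the Fourier section). Your proposal is correct on additivity and injectivity, and its "remaining step" is correctly identified as the precise point of failure of the statement itself.
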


\begin{proof} The fact that Hadamard matrices can be used to construct
$2$-pseudo-close complete orthogonal basis is immediate from the
discussion in section{[}1{]}, consequently our proof shall focus on
showing that the existence of a $2$-pseudo-close complete orthogonal
basis of full rank matrices $\mathcal{B}=\left\{ \mathbf{N}\left(k,\, l\right)\right\} _{0\le k,l<n}$,
implies the existence of an $n\times n$ hadamard matrix.

If $\mathcal{B}$ is a complete basis then it must also express diagonal
matrices, hence for a diagonal matrix $\mathbf{D}$ we have 
\begin{equation}
\mathbf{D}=\sum_{0\le k,l<n}\frac{1}{\left\Vert \mathbf{N}\left(k,\, l\right)\right\Vert _{\ell_{2}}^{2}}\left\langle \mathbf{D},\:\mathbf{N}\left(k,\, l\right)\right\rangle \mathbf{N}\left(k,\, l\right).
\end{equation}
 We recall that the defining property for diagonal matrices is the
fact that 
\begin{equation}
\forall m>2\quad\mathbf{D}^{m}=\mathbf{D}^{\star^{m}}
\end{equation}
from which it follows that the elements of $\mathcal{B}$ for which
$\left\langle \mathbf{D},\:\mathbf{N}\left(k,\, l\right)\right\rangle \ne0$
must also be diagonal matrices. Futhermore since the basis element
have entries belong to the set $\left\{ 0,\pm1\right\} $ and most
importantly are full rank and it follows that the diagonal entries
should be non zero and the diagonal elements of $\mathcal{B}$ should
span a vector space of dimension $n$ , which completes the proof
$\square$. \end{proof}

\section{Conclusion}

We have discussed here a variety of matrix bases and illustrated how
the notion of $k$-pseudo closure for complete orthogonal matrix bases
ties together closure properties with respect to the matrix product
operation so often associated with groups on one hand and complete
orthogonal basis commonly associated with vector spaces on the other
hand. We point out that the $k$-pseudo-close complete matrix basis
generalize Pauli matrices and simultaneously provide us with an alternative
approach to generalizing the algebra of quaternions and octonions.
Furthermore by analogy to discrete fourier analysis we argue that
these basis suggest a natural framework for matrix fourier transform
providing us with a choice of basis from which on might select the
one which is best suited to some particular application. Finally from
an algorithmic point of view, recalling the fact that the fast fourier
transform plays a crucial role for fast integer mulitplication algorithm,
It might be of interest to investigate whether matrix fourier transform
and their corresponding convolution products also suggest efficient
algorithms for matrix multiplication.

\section{acknowledgement}

The author is indebted to Prof. Doron Zeilberger, Prof. Vladimir Retakh,
Prof. Ahmed Elgammal and Prof. Henry Cohn for insightful discussions
and precious advice.

\end{document}